\newtheorem{theorem}{Theorem}
\newtheorem{remark}[theorem]{Remark}
\newenvironment{proof}[1][Proof]{\noindent\textbf{#1.} }{\ \rule{0.5em}{0.5em}}
\begin{document}

\title{A criterion for the triviality of non-steady gradient Ricci solitons}
\author{Mohammed Guediri}
\maketitle

\begin{abstract}
The main purpose of this paper is to show that a normalized non-steady
gradient Ricci soliton $\left( M,g,f,\lambda \right) $ of dimension $n$ is
trivial if and only if its scalar curvature $S$ satisfies the equality $%
S=\lambda \left( f+\frac{n}{2}\right) .$
\end{abstract}

\smallskip \renewcommand{\thefootnote}{} \footnotetext{\textsl{2000
Mathematics Subject Classification:} 53C25.
\par
\textsl{Keywords:} Ricci flow, Ricci soliton, Gradient Ricci soliton .}

\section{Introduction}

A one-parameter family of Riemannian metrics $g(t)$ on a smooth manifold $M$
is a solution of the Ricci flow equation if 
\begin{equation}
\frac{\partial }{\partial t}g\left( t\right) =-2Ric_{g\left( t\right) }
\label{1}
\end{equation}%
where $Ric_{g\left( t\right) }$ is the Ricci curvature of $g\left( t\right)
. $

A solution $g(t)$ of the Ricci flow on $M$ is said to be a self-similar
solution if there exist a positive function $\sigma \left( t\right) $ and a
one-parameter family $\psi \left( t\right) $ of diffeomorphisms of $M$ such
that the one-parameter family of metrics $g\left( t\right) =\sigma \left(
t\right) \psi \left( t\right) ^{\ast }g\left( 0\right) $ satisfies equation (%
\ref{1}). Here $\psi \left( t\right) ^{\ast }$ stands for the pullback along
the diffeomorphism $\psi \left( t\right) .$

Given a Riemannian manifold $\left( M,g\right) ,$ it can be shown that for
any self-similar solution $g\left( t\right) $ of the Ricci fow, with the
initial condition $g\left( 0\right) =g,$ there exists a vector field $X$ on $%
M$ and a constant $\lambda $ satisfying the equation

\begin{equation}
Ric+\frac{1}{2}L_{X}g=\lambda g,  \label{2}
\end{equation}%
where $Ric$ is the Ricci curvature of $g$ and $L_{X}g$ is the Lie derivative
of $g$ in the direction of $X.$ Conversely, given a vector field $X$ on $M$
satisfying (\ref{2}) for some constant $\lambda ,$ it can be shown that $X$
generates a one-parameter family $\psi \left( t\right) $ of diffeomorphisms
so that the one-parameter family of metrics 
\begin{equation*}
g\left( t\right) =\left( 1-2\lambda t\right) \psi \left( t\right) ^{\ast }g
\end{equation*}%
is a self-similar solution of the Ricci flow given by equation (\ref{1}).
For more details about this correspondence, see for instance \cite{chow}.

Given a Riemannian manifold $\left( M,g\right) $ and a vector field $X$ on $%
M $ satisfying equation (\ref{2}) for some constant $\lambda ,$ we say that
the quadruple $\left( M,g,X,\lambda \right) $ is a Ricci soliton. It is
called shrinking if $\lambda >0,$ steady if $\lambda =0,$ and expanding if $%
\lambda <0.$

\section{Preliminaries}

If $X$ is the gradient $\nabla f$ of some function $f$ on $M$ with respect
to the metric $g,$ then $\left( M,g,f,\lambda \right) $ is called a gradient
Ricci soliton. Since the divergence of a vector field $X$ is defined as 
\begin{equation*}
\func{div}X=\frac{1}{2}trace\left( L_{X}g\right) ,
\end{equation*}%
and since the hessian of a function $f$ with respect to $g$ is defined as $%
Hess~f=\func{div}X,$ it follows that for a gradient Ricci soliton $\left(
M,g,f,\lambda \right) ,$ equation (\ref{1}) states the form

\begin{equation}
Ric+Hess~f=\lambda g.  \label{3}
\end{equation}

If $f$ is constant, we say that the soliton is trivial. In this case, $%
\left( M,g\right) $ becomes an Einstein manifold.

\bigskip

On the other hand, if $\left( M,g,X,\lambda \right) $ is a Ricci soliton,
then by tracing (\ref{2}), we get%
\begin{equation}
S+\func{div}X=n\lambda ,  \label{4}
\end{equation}%
where $S$ denotes the scalar curvature of $g.$ In the particular case of a
gradient Ricci soliton $\left( M,g,f,\lambda \right) ,$ we get

\begin{equation}
S+\triangle f=n\lambda  \label{5}
\end{equation}

We also have the following well known identity that follows from the second
Bianchi identity 
\begin{equation}
\nabla S=2\func{div}\left( Ric\right)  \label{6}
\end{equation}

For a gradient Ricci soliton $\left( M,g,f,\lambda \right) ,$ by using (\ref%
{2}) and (\ref{6}), we easily get%
\begin{equation}
S+\left\vert \nabla f\right\vert ^{2}=2\lambda f+c,  \label{7}
\end{equation}%
where $c$ is a constant.

If $\left( M,g,f,\lambda \right) $ is a non-steady gradient Ricci soliton,
then one can replace $f$ by $f-\frac{c}{2\lambda }$ to obtain%
\begin{equation}
S+\left\vert \nabla f\right\vert ^{2}=2\lambda f  \label{7'}
\end{equation}

In this case, we say that $\left( M,g,f,\lambda \right) $ is a normalized
gradient Ricci soliton.

\bigskip If $\lambda _{1},\ldots ,\lambda _{n}$ are the eigenvalues of the
Ricci $\left( 1,1\right) $-tensor, we define the squared norm of $Ric$ as 
\begin{equation*}
\left\vert Ric\right\vert ^{2}=\sum_{i=1}^{n}\lambda _{i}^{2}
\end{equation*}

In a similar way, we define the squared norm $\left\vert Hess~f\right\vert
^{2}$ of the hessian of $f$ to be the squared norm of the operator $\nabla
\nabla f.$

Using the Cauchy--Schwarz inequality, we get 
\begin{equation}
\left\vert Ric\right\vert ^{2}\geq \frac{S^{2}}{n}  \label{8}
\end{equation}

We finally turn our attention to the following formulas, quoted from \cite%
{petersen-wylie}, which will be useful for us in the next sections. 
\begin{equation}
\triangle S-g\left( \nabla S,\nabla f\right) +2\left\vert Ric\right\vert
^{2}=2\lambda S  \label{9}
\end{equation}

\begin{equation}
\frac{1}{2}\triangle \left( \left\vert \nabla f\right\vert ^{2}\right)
=\left\vert Hess~f\right\vert ^{2}-Ric\left( \nabla f,\nabla f\right)
\label{10}
\end{equation}

In fact, it should be noticed that formula (\ref{10}) is nothing but the
well known Bochner's formula adapted to gradient Ricci solitons.

\section{\protect\bigskip Gradient Ricci solitons satisfying the equality $S=%
\protect\lambda f+c$}

\begin{theorem}
Let $\left( M,g,f,\lambda \right) $ be a normalized non-steady gradient
Ricci soliton of dimension $n$, which satisfies an equality of the form $%
S=\lambda f+c,$ for some constant $c.$ Then, we have%
\begin{eqnarray}
\left\vert Ric\right\vert ^{2} &=&\lambda ^{2}\left( 2f-\frac{n}{2}\right)
+\lambda c,  \label{11} \\
\left\vert Hess~f\right\vert ^{2} &=&\lambda \left( \frac{n}{2}\lambda
-c\right)  \label{12}
\end{eqnarray}
\end{theorem}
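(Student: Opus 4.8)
The plan is to derive both identities by substituting the hypothesis $S=\lambda f+c$ into the two curvature identities (\ref{9}) and (\ref{10}), after first rewriting every quantity appearing there as an explicit function of $f$. The three ingredients I would prepare at the outset are: differentiating the hypothesis to get $\nabla S=\lambda \nabla f$ and hence $\triangle S=\lambda \triangle f$; using the trace identity (\ref{5}) to write $\triangle f=n\lambda -S=n\lambda -\lambda f-c$; and using the normalization (\ref{7'}) to write $\left\vert \nabla f\right\vert ^{2}=2\lambda f-S=\lambda f-c$. Together these express $\triangle S$, the pairing $g\left( \nabla S,\nabla f\right) =\lambda \left\vert \nabla f\right\vert ^{2}$, and $\left\vert \nabla f\right\vert ^{2}$ itself purely in terms of $f$ and the constants $\lambda ,c$.

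For (\ref{11}), I would feed these into (\ref{9}). Substituting $\triangle S=\lambda \left( n\lambda -\lambda f-c\right) $, $g\left( \nabla S,\nabla f\right) =\lambda \left( \lambda f-c\right) $, and $S=\lambda f+c$, equation (\ref{9}) becomes a linear relation in which the $f$-dependent terms combine; solving for $\left\vert Ric\right\vert ^{2}$ then yields $\left\vert Ric\right\vert ^{2}=\lambda ^{2}\left( 2f-\frac{n}{2}\right) +\lambda c$. This step is purely algebraic once the substitutions are in place.

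For (\ref{12}), I would use the Bochner-type identity (\ref{10}). Its left-hand side is immediate: since $\left\vert \nabla f\right\vert ^{2}=\lambda f-c$, we get $\frac{1}{2}\triangle \left( \left\vert \nabla f\right\vert ^{2}\right) =\frac{\lambda }{2}\triangle f=\frac{\lambda }{2}\left( n\lambda -\lambda f-c\right) $. The only nonroutine piece is the term $Ric\left( \nabla f,\nabla f\right) $, which I would compute from the soliton equation (\ref{3}) as $Ric\left( \nabla f,\nabla f\right) =\lambda \left\vert \nabla f\right\vert ^{2}-Hess~f\left( \nabla f,\nabla f\right) $, and then use the identity $Hess~f\left( \nabla f,\nabla f\right) =\frac{1}{2}g\left( \nabla \left\vert \nabla f\right\vert ^{2},\nabla f\right) =\frac{\lambda }{2}\left\vert \nabla f\right\vert ^{2}$, valid because $\nabla \left\vert \nabla f\right\vert ^{2}=\lambda \nabla f$. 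Substituting $\left\vert \nabla f\right\vert ^{2}=\lambda f-c$ gives $Ric\left( \nabla f,\nabla f\right) =\frac{1}{2}\left( \lambda ^{2}f-\lambda c\right) $.

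Finally, I would combine these in (\ref{10}): adding $Ric\left( \nabla f,\nabla f\right) $ back to $\frac{1}{2}\triangle \left( \left\vert \nabla f\right\vert ^{2}\right) $, the $f$-dependent terms cancel and leave $\left\vert Hess~f\right\vert ^{2}=\lambda \left( \frac{n}{2}\lambda -c\right) $, as claimed. The main obstacle, such as it is, lies in handling $Ric\left( \nabla f,\nabla f\right) $: one must recognize that $Hess~f\left( \nabla f,\nabla f\right) $ equals $\frac{1}{2}g\left( \nabla \left\vert \nabla f\right\vert ^{2},\nabla f\right) $, so that the hypothesis can be applied to it; every remaining step is direct substitution and cancellation.
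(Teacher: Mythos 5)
Your proof is correct and follows essentially the same route as the paper: substitute $S=\lambda f+c$ into (\ref{9}) and (\ref{10}) after expressing $\triangle f$ and $\left\vert \nabla f\right\vert ^{2}$ in terms of $f$ via (\ref{5}) and (\ref{7'}). The only (harmless) divergence is in evaluating $Ric\left( \nabla f,\nabla f\right) $: the paper invokes the soliton identity $Ric\left( \nabla f,\nabla f\right) =\frac{1}{2}g\left( \nabla f,\nabla S\right) $, whereas you reach the same value $\frac{\lambda }{2}\left( \lambda f-c\right) $ from the soliton equation (\ref{3}) together with $Hess~f\left( \nabla f,\nabla f\right) =\frac{1}{2}g\left( \nabla \left\vert \nabla f\right\vert ^{2},\nabla f\right) $.
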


\begin{proof}
Note first that, from the equality $S=\lambda f+c,$ we have $\nabla
S=\lambda \nabla f$ and $\triangle S=\lambda \triangle f$ . Taking into
account these quantities, we see that (\ref{9}) becomes

\begin{equation}
\lambda \triangle f-\lambda \left\vert \nabla f\right\vert ^{2}+2\left\vert
Ric\right\vert ^{2}=2\lambda \left( \lambda f+c\right)  \label{13}
\end{equation}

On the other hand, by substituting the value $S=\lambda f+c$ into (\ref{5})
and (\ref{7'}), then we get

\begin{equation}
\triangle f=n\lambda -\lambda f-c,  \label{14}
\end{equation}%
and%
\begin{equation}
\left\vert \nabla f\right\vert ^{2}=\lambda f-c,  \label{15}
\end{equation}%
respectively.

By replacing (\ref{14}) and (\ref{15}) in (\ref{13}), we easily obtain (\ref%
{11}).

Similarly, we derive (\ref{12}) just by replacing (\ref{14}) and (\ref{15})
in the adapted Bochner's formula (\ref{10}) and taking into account the
following formula which is nothing but formula (\ref{6}). 
\begin{equation*}
Ric\left( \nabla f,\nabla f\right) =\frac{1}{2}g\left( \nabla f,\nabla
S\right) .
\end{equation*}
\end{proof}

\begin{remark}
It is remarkable that, adding equations\bigskip \bigskip\ (\ref{11}) and (%
\ref{12}) gives the equation%
\begin{equation}
\left\vert Ric\right\vert ^{2}+\left\vert Hess~f\right\vert ^{2}=2\lambda
^{2}f  \label{16}
\end{equation}
\end{remark}

\section{Main result\protect\bigskip}

We are now in position to prove our main result. Its proof will rely in a
very strong way on the formulas~(\ref{11}), (\ref{12}), and (\ref{16}).

\begin{theorem}
\label{main thm}Let $\left( M,g,f,\lambda \right) $ be a normalized
non-steady gradient Ricci soliton of dimension $n,$ and let $S$ denote the
scalar curvature of $M.$ Then, $\left( M,g,f,\lambda \right) $ is trivial if
and only if $S=\lambda \left( f+\frac{n}{2}\right) .$
\end{theorem}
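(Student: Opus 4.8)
The plan is to prove the two implications separately, keeping in mind throughout that the non-steady hypothesis $\lambda\neq 0$ is precisely what will let me divide by $\lambda$ at the two decisive moments.

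For the forward implication I would assume $f$ is constant. Then $\nabla f=0$ and $Hess~f=0$, so equation~(\ref{3}) collapses to $Ric=\lambda g$, and tracing gives $S=n\lambda$. Substituting $\nabla f=0$ into the normalization~(\ref{7'}) yields $S=2\lambda f$, hence $n\lambda=2\lambda f$; since $\lambda\neq 0$ this forces $f=\frac{n}{2}$. Then $S=n\lambda=\lambda\left(\frac{n}{2}+\frac{n}{2}\right)=\lambda\left(f+\frac{n}{2}\right)$, which is the desired equality. This direction is entirely routine.

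The reverse implication is the substantive one, and the key observation is that the hypothesis $S=\lambda\left(f+\frac{n}{2}\right)$ is exactly an equality of the form $S=\lambda f+c$ with the single distinguished value $c=\frac{n\lambda}{2}$. This places me squarely in the setting of the first theorem, so formulas~(\ref{11}) and~(\ref{12}) apply. The crucial step is then to feed $c=\frac{n\lambda}{2}$ into~(\ref{12}): the two terms cancel and I obtain $\left\vert Hess~f\right\vert^{2}=\lambda\left(\frac{n}{2}\lambda-\frac{n\lambda}{2}\right)=0$, so $Hess~f\equiv 0$ on $M$. I expect the only point requiring genuine care to be the passage from $Hess~f\equiv 0$ to the triviality of $f$, since a vanishing Hessian a priori only makes $\nabla f$ parallel rather than zero (as for a linear function on $\mathbb{R}^{n}$).

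I would dissolve this difficulty by invoking the non-steady hypothesis a second time. From $Hess~f\equiv 0$ we get $\triangle f=0$, so~(\ref{5}) forces the scalar curvature to be the constant $S=n\lambda$; comparing this with $S=\lambda f+\frac{n\lambda}{2}$ and dividing by $\lambda\neq 0$ gives $f\equiv\frac{n}{2}$, a constant, so the soliton is trivial. As internal consistency checks, once $\left\vert Hess~f\right\vert^{2}=0$ formula~(\ref{16}) (equivalently~(\ref{11})) gives $\left\vert Ric\right\vert^{2}=2\lambda^{2}f$, which at $f=\frac{n}{2}$ equals $n\lambda^{2}=S^{2}/n$; thus the Cauchy--Schwarz estimate~(\ref{8}) is saturated, in agreement with the Einstein structure $Ric=\lambda g$ that triviality produces.
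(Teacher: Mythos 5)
Your proof is correct, and the decisive idea is the same as the paper's: recognize the hypothesis as the case $c=\frac{n}{2}\lambda$ of the equality $S=\lambda f+c$, so that (\ref{12}) immediately forces $\left\vert Hess~f\right\vert ^{2}=0$. Where you diverge is in the passage from $Hess~f\equiv 0$ to $f$ constant. The paper feeds $\left\vert Hess~f\right\vert ^{2}=0$ into (\ref{16}) to get $\left\vert Ric\right\vert ^{2}=2\lambda ^{2}f$ and then invokes the Cauchy--Schwarz bound (\ref{8}) to squeeze out $0\geq \left( f-\frac{n}{2}\right) ^{2}$, hence $f=\frac{n}{2}$. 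You instead take the trace of $Hess~f=0$ to get $\triangle f=0$, read off $S=n\lambda $ from (\ref{5}), and compare with the hypothesis $S=\lambda f+\frac{n}{2}\lambda $ to solve for $f=\frac{n}{2}$ directly (using $\lambda \neq 0$). Your ending is more elementary: it bypasses (\ref{11}), (\ref{16}) and the curvature-norm inequality entirely, needing only (\ref{12}) and the traced soliton equation. You were also right to flag that $Hess~f\equiv 0$ alone only makes $\nabla f$ parallel and does not by itself give triviality; both your argument and the paper's close that gap, yours via (\ref{5}) and the paper's via the rigidity in (\ref{8}). Your closing consistency check (that (\ref{8}) is saturated at $f=\frac{n}{2}$, matching $Ric=\lambda g$) is essentially the paper's mechanism run in reverse, which confirms the two routes are compatible.
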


\begin{proof}
Assume first that $\left( M,g,f,\lambda \right) $ is trivial. In this case,
we deduce from (\ref{5}) and (\ref{7'}) that $S=n\lambda $ and $f=\frac{n}{2}%
,$ respectively. Therefore, 
\begin{equation*}
S=\lambda \left( f+\frac{n}{2}\right) .
\end{equation*}

Conversely, assume that 
\begin{equation*}
S=\lambda \left( f+\frac{n}{2}\right) .
\end{equation*}
Then, with $c=\frac{n}{2}\lambda ,$ we deduce from (\ref{12}) that $%
\left\vert Hess~f\right\vert =0.$ It follows from (\ref{16}) that 
\begin{equation*}
\left\vert Ric\right\vert ^{2}=2\lambda ^{2}f.
\end{equation*}

Now, by using (\ref{8}), we have

\begin{equation*}
2\lambda ^{2}f=\left\vert Ric\right\vert ^{2}\geq \frac{S^{2}}{n}=\frac{%
\lambda ^{2}\left( f+\frac{n}{2}\right) ^{2}}{n},
\end{equation*}%
from which we get that 
\begin{equation*}
2nf\geq f^{2}+\frac{n^{2}}{4}+nf,
\end{equation*}%
or equivalently%
\begin{equation*}
0\geq \left( f-\frac{n}{2}\right) ^{2}.
\end{equation*}

Therefore $f=\frac{n}{2},$ namely $\left( M,g,f,\lambda \right) $ is trivial.
\end{proof}

\bigskip

Theorem \ref{poisson thm} below will provide a good example of a non-trivial 
$\left( M,g,f,\lambda \right) .$ But, let us first turn our attention to a
related result which was first proved in \cite{chen-deshmukh}. We give here
a very short and simple proof of it.

\begin{theorem}
Let $\left( M,g,f,\lambda \right) $ be a compact shrinking gradient Ricci
soliton of dimension $n.$ If the scalar curvature $S$ is solution of the
poisson equation $\triangle S=\sigma ,$ where $\sigma =\lambda \left(
n\lambda -S\right) ,$ then either $\left( M,g,f,\lambda \right) $ is trivial
or $\lambda \geq \lambda _{1},$ where $\lambda _{1}$ is the first eigenvalue
of the Laplacian $\triangle .$
\end{theorem}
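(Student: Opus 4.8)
The plan is to work on a compact shrinking gradient Ricci soliton and exploit the hypothesis that $S$ solves the Poisson equation $\triangle S=\sigma$ with $\sigma=\lambda(n\lambda-S)$. First I would recall the general identity (\ref{9}), namely $\triangle S-g(\nabla S,\nabla f)+2|Ric|^{2}=2\lambda S$, which holds for any gradient Ricci soliton. I would like to integrate this over the compact manifold $M$, but the troublesome term is the drift term $g(\nabla S,\nabla f)$. The standard trick here is to pass to the weighted (Bakry--\'Emery) setting: note that $\triangle_f S:=\triangle S-g(\nabla S,\nabla f)$ is the drift Laplacian, which is self-adjoint with respect to the weighted volume $e^{-f}\,dV$, so $\int_M \triangle_f S\,e^{-f}\,dV=0$. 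Integrating (\ref{9}) against $e^{-f}\,dV$ would then give a clean relation between $\int_M |Ric|^2 e^{-f}\,dV$ and $\int_M \lambda S\,e^{-f}\,dV$.

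Next I would bring in the Poisson hypothesis. Substituting $\triangle S=\lambda(n\lambda-S)$ into (\ref{9}) yields
\begin{equation*}
\lambda(n\lambda-S)-g(\nabla S,\nabla f)+2|Ric|^{2}=2\lambda S.
\end{equation*}
The idea is to compare $|Ric|^2$ with $S^2/n$ using the Cauchy--Schwarz inequality (\ref{8}). One expects that integrating the identity and applying (\ref{8}) will produce an inequality of the form $\int_M(\text{nonnegative expression})\le 0$, where the nonnegative expression is essentially $|Ric|^2-S^2/n$ together with a term governed by the first eigenvalue $\lambda_1$. The dichotomy in the conclusion---either triviality or $\lambda\ge\lambda_1$---strongly suggests that the variational characterization of $\lambda_1$ enters: the first eigenvalue satisfies $\int_M|\nabla u|^2\ge\lambda_1\int_M u^2$ for functions $u$ with $\int_M u=0$, so I would apply this to $u=S-\bar S$ (or to $\triangle S$-related quantities) after checking the mean-zero condition. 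The condition $\triangle S=\lambda(n\lambda-S)$ itself is, up to a shift, an eigenvalue-type equation: writing it as $\triangle S=-\lambda(S-n\lambda)$ says that $S-n\lambda$ (after centering) is an eigenfunction-like object with eigenvalue $\lambda$, which is precisely why $\lambda$ must be compared to $\lambda_1$.

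Concretely, I would set $u=S-n\lambda$ and rewrite the Poisson equation as $\triangle u=-\lambda u$. Integrating $\triangle u=-\lambda u$ over $M$ forces either $\lambda\int_M u=0$. If $u$ is not identically zero and $\int_M u=0$ fails, one gets $\int_M u=0$ automatically only when $\lambda\ne 0$; and then $u$ is a genuine nonconstant eigenfunction with eigenvalue $\lambda$, whence $\lambda\ge\lambda_1$ by the min-max characterization. The remaining case is $u\equiv 0$, i.e.\ $S\equiv n\lambda$ is constant; combined with (\ref{5}) this gives $\triangle f=0$ on a compact manifold, hence $f$ is constant and the soliton is trivial. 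The hard part will be handling the mean-zero requirement correctly and ruling out the degenerate possibilities: one must verify that when $u\not\equiv0$ the integral $\int_M u\,e^{-f}\,dV$ or $\int_M u\,dV$ vanishes so that $u$ is admissible in the Rayleigh quotient, and one must confirm that $\lambda>0$ (the shrinking hypothesis) is used to exclude spurious solutions. I expect that the compactness and the shrinking sign $\lambda>0$ are exactly what make this clean, and the whole argument reduces to recognizing the Poisson equation as an eigenvalue equation for $-\triangle$ with eigenvalue $\lambda$.
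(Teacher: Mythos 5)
Your final paragraph is essentially the paper's own proof: rewrite the Poisson equation as $-\triangle\left( n\lambda -S\right) =\lambda \left( n\lambda -S\right) $, so either $S\equiv n\lambda $ (whence $\triangle f=n\lambda -S=0$ by (\ref{5}) and $f$ is constant by compactness, i.e.\ the soliton is trivial) or $n\lambda -S$ is a nonzero eigenfunction and $\lambda $ is an eigenvalue of $\triangle $, forcing $\lambda \geq \lambda _{1}$; your mean-zero verification ($\int_{M}u=0$ follows from integrating $\triangle u=-\lambda u$ and using $\lambda >0$) is a correct, slightly more careful version of the paper's appeal to the min-max characterization. The weighted-integration and Cauchy--Schwarz machinery in your first two paragraphs is never needed for this statement and can be deleted.
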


\begin{proof}
Since 
\begin{eqnarray*}
\triangle S &=&\sigma \\
&=&\lambda \left( n\lambda -S\right) ,
\end{eqnarray*}
we get 
\begin{equation}
-\triangle \left( n\lambda -S\right) =\lambda \left( n\lambda -S\right)
\label{17}
\end{equation}

If $S$ is constant, it follows from (\ref{17}) that $S=n\lambda .$ By
substituting this into (\ref{5}), we get 
\begin{equation*}
\triangle f=n\lambda -S=0.
\end{equation*}%
Since $M$ is compact, we deduce that $f$ is constant, that is, $\left(
M,g,f,\lambda \right) $ is trivial.

If $S$ is not constant, then we see from (\ref{17}) that $\lambda $ is an
eigenvalue of the Laplacian $\triangle .$ Consequently, we have $\lambda
\geq \lambda _{1}.$
\end{proof}

\bigskip

\begin{theorem}
\label{poisson thm}Let $\left( M,g,f,\lambda \right) $ be a compact
normalized shrinking non-trivial gradient Ricci soliton of dimension $n.$ If
the scalar curvature $S$ is solution of the poisson equation $\triangle
S=\sigma ,$ where $\sigma =\lambda \left( n\lambda -S\right) ,$ then $%
\lambda $ is an eigenvalue of the Laplacian $\triangle $ and $S=\lambda f+c,$
for some constant $c\neq \frac{n}{2}\lambda .$
\end{theorem}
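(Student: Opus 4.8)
The plan is to exploit the two scalar relations already available, namely the trace identity (\ref{5}) and the hypothesized Poisson equation, and to convert them into a statement about a single function on the compact manifold $M$. First I would rewrite (\ref{5}) as $\triangle f = n\lambda - S$, so that the quantity $n\lambda - S$ occurring in the Poisson equation is exactly $\triangle f$. Differentiating the relation $S = n\lambda - \triangle f$ by $\triangle$ and inserting the Poisson equation $\triangle S = \lambda\left(n\lambda - S\right)$ then gives the identity $\triangle\left(\triangle f\right) = -\triangle S = -\lambda\left(n\lambda - S\right) = -\lambda\,\triangle f$. Since $\left(M,g,f,\lambda\right)$ is non-trivial and $M$ is compact, $f$ is non-constant and hence $\triangle f \not\equiv 0$; thus $\triangle f$ is a non-zero eigenfunction of $-\triangle$ with eigenvalue $\lambda$, which already establishes that $\lambda$ is an eigenvalue of the Laplacian.

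For the second assertion I would observe that the same identity $\triangle\left(\triangle f\right) + \lambda\,\triangle f = 0$ can be recast as $\triangle\left(\triangle f + \lambda f\right) = 0$. On a compact manifold a harmonic function is constant, so $\triangle f + \lambda f = c_{0}$ for some constant $c_{0}$. Feeding $\triangle f = n\lambda - S$ back into this relation yields $S = \lambda f + \left(n\lambda - c_{0}\right)$, which is the desired equality $S = \lambda f + c$ with $c = n\lambda - c_{0}$.

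Finally, to rule out $c = \frac{n}{2}\lambda$ I would argue by contradiction: if $c = \frac{n}{2}\lambda$, then $S = \lambda\left(f + \frac{n}{2}\right)$, and Theorem~\ref{main thm} would force $\left(M,g,f,\lambda\right)$ to be trivial, contradicting the non-triviality hypothesis; hence $c \neq \frac{n}{2}\lambda$. The conceptual heart of the argument, and the step I expect to be the main obstacle to spot, is recognizing that combining the trace identity with the Poisson equation produces the biharmonic-type relation $\triangle^{2} f = -\lambda\,\triangle f$, whose real content on a compact manifold is that $\triangle f + \lambda f$ is harmonic and therefore constant. Once this is seen, both conclusions follow almost immediately, with compactness entering decisively to annihilate the harmonic term.
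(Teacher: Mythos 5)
Your proof is correct and follows essentially the same route as the paper: both arguments identify $n\lambda - S = \triangle f$ as a nonzero eigenfunction of the Laplacian with eigenvalue $\lambda$, both obtain $S - \lambda f = c$ from the harmonicity of that combination on the compact manifold $M$, and both exclude $c = \frac{n}{2}\lambda$ by invoking Theorem~\ref{main thm} against the non-triviality hypothesis. The only cosmetic difference is that you phrase the computation in terms of $\triangle f$ while the paper works directly with $S - \lambda f$, and your observation that non-triviality plus compactness directly forces $\triangle f \not\equiv 0$ is a slightly more streamlined justification than the paper's appeal to the preceding theorem.
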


\begin{proof}
Since $\left( M,g,f,\lambda \right) $ is not trivial, it follows that $S$ is
not constant. Therefore, as in the proof of the above theorem, we deduce
that $\lambda $ is an eigenvalue of the Laplacian $\triangle .$

On the other hand, by (\ref{5}), we have 
\begin{eqnarray*}
\triangle S &=&\lambda \left( n\lambda -S\right) \\
&=&\lambda \triangle f
\end{eqnarray*}

In other words, we have 
\begin{equation*}
\triangle \left( S-\lambda f\right) =0.
\end{equation*}

Since $M$ is compact, we deduce that $S-\lambda f=c$ for some constant $c.$
By Theorem \ref{main thm}, we shoud have $c\neq \frac{n}{2}\lambda .$
\end{proof}

\bigskip

\bigskip

Mohammed Guediri

Department of Mathematics, College of Science,

King Saud University.

PO Box 2455, Riyadh 11451, Saudi Arabia

\bigskip E-mail addresses: mguediri@ksu.edu.sa.

\medskip

\end{document}